\documentclass[leqno,12pt]{article} 

\usepackage{fullpage}


\usepackage{color}
\usepackage{amssymb}
\usepackage{amsfonts}
\usepackage{amscd}
\usepackage{amstext}
\usepackage{latexsym,amssymb,amsthm}
\usepackage{indentfirst}
\usepackage{amsmath}




\newcommand{\hum}{H^2_{\textrm{rad}}(\mathbb{R}^N)}

\newcommand{\hd}{W^{2,2}(\mathbb{R}^N)}





\newcommand{\avg}{\raise 0pt\hbox{$-$}\hskip -10.7pt\int}

\newtheorem{theorem}{Theorem}[]
\newtheorem{lemma}[]{Lemma}

\newtheorem{proposition}[]{Proposition}

\newtheorem{remark}[]{Remark}






\newcommand{\im}{\dis\int_{\mathbb{R}^N}}
\newcommand{\dis}{\displaystyle}

\author{\textbf{J. C. Oliveira Junior} \thanks{Corresponding author. \textit{E-mail address: jc.oliveira@uft.edu.br}}\\
\small Curso de Matem\'atica -- Universidade Federal do Tocantins \\
\small 77.824--838 --  Aragua\'ina, TO, Brazil\\
}
\date{}

\title{Ground state solution for a class of modified nonlinear fourth-order elliptic equation with sign-changing unbounded potential}

\begin{document}

\maketitle

\begin{abstract} We are concerned on the fourth-order elliptic equation 

\begin{equation}\tag{$P_\lambda$} \left\{ \begin{array}[c]{ll}
           \Delta^2 u- \Delta u + V(x)u -\lambda \Delta[\rho(u^2)]\rho'(u^2)u= f(u)\, \, \mbox{in} \, \, \mathbb{R}^N, &\\
           u\in W^{2,2}(\mathbb{R}^N),
        \end{array}
         \right.
\end{equation}
where $\Delta^2 = \Delta(\Delta)$ is the biharmonic operator, $3\leq N\leq 6$, the radially symmetric potential $V$ may change sign and $\inf_{\mathbb{R}^N}V(x)=-\infty$ is allowed. If $f$ satisfies a type of nonquadracity and monotonicity conditions and $\rho$ is a suitable smooth function, we prove, via variational approach, the existence of a radially symmetric nontrivial ground state  solution $u_\lambda$ for problem $(P_\lambda)$ for all $\lambda\geq 0$. 

\end{abstract}

\noindent\textbf{Mathematical Subject Classification MSC2010: 35J35, 35J62, 35Q35.}\\
\noindent{\sc Keywords}. Fourth-order operator; Quasilinear equations; Nehari manifold; Unbounded potential; Variational Methods.

\vspace{0.5cm}




\section{Introduction and statement of the main result}

The study of existence of a standing wave solution for quasilinear Schr\"odinger equation of the form
\begin{equation}\label{fisicaequa}
i\dfrac{\partial z}{\partial t} = -\Delta z + W(x)z - l(x,z) - k[\Delta \rho (|z|^2)]\rho'(|z|^2)z,
\end{equation}
where $z:\mathbb{R}\times\mathbb{R}^N\rightarrow \mathbb{C}$, $W:\mathbb{R}^N\rightarrow \mathbb{R}$ is a given potential, $k$ is a real constant and $l$ and $\rho$ are real functions, is related with several phenomena in physics. For instance, if $k\neq 0$ and for specific functions $l,\rho$, equations (\ref{fisicaequa}) appears in plasma physics and fluid mechanics \cite{KU, LSS, LS1, LZ}, in the classical and quantum theory of Heisenberg ferromagnet and magnons \cite{QC3, TH}, in dissipative quantum mechanics \cite{RWH}, in laser theory \cite{BG1, BR1} and in condensed matter theory \cite{MF1}. The particular case  $\rho(s)=s$ occurs in theory of superfluids (see \cite{KU, LSS, LWW} and the references in \cite{LPT}). The case $\rho(s)=(1+s)^{1/2}$ appears in the self-channeling of a high-power ultra short laser in matter (see \cite{BHS1, BHS2}).

If we want to find standing wave solutions for (\ref{fisicaequa}), we take $z(t, x) := \textrm{exp}(-iEt)u(x)$ with $E \in \mathbb{R}$ and $u:\mathbb{R}^N\to\mathbb{R}$ a function, which leads to consider the following elliptic equation
\begin{equation}\label{fisicaequa2}
\begin{array}{lc}
-\Delta u + V(x) u - k\Delta (\rho(u^2))\rho'(u^2) u= f_1(x,u) &  \textrm{in} \ \mathbb{R}^N.
\end{array}
\end{equation}

In the last sixteen years, many authors has dedicated to study problem (\ref{fisicaequa2}) via various methods. To the case $\rho(s)=s$, we refer to \cite{MOA}, where $V$ is radially symmetric and might change sign. In \cite{ZTZ}, under the condition that $V$ is a continuous function satisfying $\inf_{\mathbb{R}^N}V(x)>-\infty$, the authors apply the dual approach and the mountain pass theorem to obtain infinitely many solutions of the nonautonomous problem (\ref{fisicaequa2})  with $f_1(x,u)$ odd in the variable $u$. There are many works concerning problem (\ref{fisicaequa2}) with $V$ satisfying $\inf_{\mathbb{R}^N}V(x) > 0$ and $f_1$ a subcritical or critical nonlinearity (see, for example, \cite{DASI, FASZU,LWW,POMSH,SIVI, YADIN}). The zero mass potential case $V\equiv 0$ was studied in \cite{CJ} and the vanishing potential case was considered in \cite{AISOU}. We stress that the authors in \cite{DASI} used the Nehari method, showing that the infimum for the energy functional associated to problem (\ref{fisicaequa2}) on the Nehari set is achieved at some nontrivial solution; while, in \cite{MRS3}, the authors considered $\rho(s)=(1+s^2)^{1/2}$ and, exploring properties of the Pohozaev manifold, they proved the existence of a nontrivial positive solution. 

The following fourth-order elliptic equation, in its turn, 
\begin{equation}\label{fisicaequa5}
\begin{array}[c]{ll}
           \Delta^2 u- \Delta u + V(x)u = f_2(x,u)\, \, \mbox{in} \, \, \mathbb{R}^N,
         \end{array}
\end{equation}
where $\Delta^2(u)=\Delta(\Delta u)$, has become extremely relevant after Lazer and Mckenna \cite{LM4} propose to study periodic oscillations and traveling waves in a suspension bridge by the following Dirichlet problem
\begin{equation}\label{fisicaequa3}
\left\{\begin{array}{ll}
\Delta^2 u +  c\Delta u = h(x,u) &  \textrm{in} \ \Omega,\\
u|_{\partial\Omega}=\Delta|_{\partial\Omega}=0,
\end{array}\right.
\end{equation}
where $\Omega\subset\mathbb{R}^N$ is a bounded domain and $c\in\mathbb{R}$. Equations (\ref{fisicaequa3}) models several phenomena in physics, as static deflection of an elastic plate in a fluid and communication satellites, which the reader can find, for example, in \cite{CT5, HW7, ZL2} and in its references.

The problem (\ref{fisicaequa5}), on unbounded domain, has also attracted interest in recent years. In \cite{WTaZ}, the authors studied the asymptotically linear case and used a variant version of the mountain pass theorem to obtain the existence of a ground state solution. The superlinear case was investigated in \cite{YT1}, where the symmetric mountain pass theorem was employed to guarantee that infinitely many nontrivial solutions exist. In \cite{CHJo}, the authors established the existence of two solutions for problem (\ref{fisicaequa5}) without the term $\Delta u$ and the nonlinearity $f_2$ involving critical growth. 

In every paper about problems (\ref{fisicaequa2}) and (\ref{fisicaequa5}) that we mentioned until here, the condition $\inf_{\mathbb{R}^N}V(x) > -\infty$ was essential to overcome the well known lack of compactness due to unbounded domains.

When $\inf_{\mathbb{R}^N}V(x) =-\infty$ is allowed, very little is found in the literature. On problem (\ref{fisicaequa2}), we refer the reader  \cite{MJCRU}, where the authors considered $\rho(s)=s$ and employed  the mountain pass theorem and interaction with the limit problem to obtain the existence of a nontrivial solution. On problem (\ref{fisicaequa5}), up to our knowledge, there is no result considering this condition on potential $V$. Since we allow this condition occurs (see hypothesis $(V_2)$ below), this is the first article that takes it into account.

Our main goal is to show the existence of a stationary solution of a linear combination between  problems (\ref{fisicaequa2}) and (\ref{fisicaequa5}) (see \cite{CLW1}), namely,
\begin{equation}\label{p}\tag{$P_\lambda$} \left\{ \begin{array}[c]{ll}
           \Delta^2 u- \Delta u + V(x)u -\lambda \Delta[\rho(u^2)]\rho'(u^2)u= f(u)\, \, \mbox{in} \, \, \mathbb{R}^N, &\\
           u\in W^{2,2}(\mathbb{R}^N),
        \end{array}
         \right.
\end{equation}
with $3\leq N\leq 6$, $\lambda\geq 0$, $V$ satisfying a technical hypothesis on its negative part and possessing a asymptotic positive limit, the nondecreasing nonlinearity $f$ has a subcritical growth and the function $\rho$ is smooth and satisfies some delicate conditions.

Moreover, unlike the authors in \cite{CJ, MRS3} and others,  we avoid the use of any change of variables, and for this reason we have to restrict the dimension $N$ to deal with the term $\Delta[\rho(u^2)]\rho'(u^2)u$ in the dual approach. To face compactness issues, we will assume that $V$ is a radially symmetric potential, which guarantees some compact embeddings of a correct Sobolev space. We decide to work with a general function $\rho$ since our work includes, along others examples, two important one: $\rho(s)=s$ and $\rho(s)=(1+s)^{1/2}$. 

Hereafter, for $r\geq 1$, let us denote $|\cdot|_r$ the usual norm in $L^r(\mathbb{R}^N)$. Also, we will equip the space $\hd$ with the norm
$$
||u||^2 = \im(|\Delta u|^2 + |\nabla u|^2 + u^2)dx,
$$
which turns $(\hd,||\cdot||)$ a Hilbert space. On the potential $V:\mathbb{R}^N\to\mathbb{R}$, writing $V(x) = V^+(x) - V^-(x)$, where $V^\pm (x) :=\max\{\pm V(x), 0\}$, we impose the following conditions.
\begin{itemize}
\item[$(V_1)$] $V(x)=V(|x|)$ for all $x\in\mathbb{R}^N$ and $\dis\lim_{|x|\rightarrow +\infty} V(x) = V_\infty > 0$;
\item[$(V_2)$] Let $2^*=\dfrac{2N}{N-2}$.  If $S$ is the best constant to the Sobolev embedding $D^{1,2}(\mathbb{R}^N)\hookrightarrow L^{2^*}(\mathbb{R}^N)$, namely, 
$$
S=\dis\inf_{u\in D^{1,2}\setminus\{0\}}\dfrac{|\nabla u|_2^2}{|u|^2_{2^*}},
$$
then
$$
|V^-|_{N/2}  < S.
$$
\end{itemize}
The nonlinearity $f:\mathbb{R}\to\mathbb{R}$ belongs to $C^2(\mathbb{R},\mathbb{R})$ and satisfies:
\begin{itemize}
\item[$(f_1)$] $f(0)=f'(0)=0$.
\item[$(f_2)$] $f'(t)t^2-f(t)t\geq \delta |t|^p$ for all $t\in\mathbb{R}$ and for some $\delta>0$ and $4<p < 2_*:=\dfrac{2N}{N-4}$ if $N> 4$ or $p>4$ if $N=3$.
\item[$(f_3)$] If $F(t)=\dis\int_0^t f(s)ds$, then $\dfrac{1}{4}f(t)t - F(t)\geq 0$ for all $t\in\mathbb{R}$.
\item[$(f_4)$] $\dis\lim_{t\to +\infty}\dfrac{f(t)}{t^{p-1}}=m>0$.
\end{itemize}
On function $\rho: [0,+\infty) \to \mathbb{R}$, denoting by $\rho^{(i)}$ the $i$-th derivative of $\rho$, we will assume the following.
\begin{itemize}
\item[$(\rho_1)$] $\rho$ is continuous and belongs to $C^4((0,+\infty),\mathbb{R})$.
\item[$(\rho_2)$] $|\rho^{(i)}(s)|\leq C_i$ for all $s\in[0,+\infty)$ and for some $C_i>0$, where $i\in\{1,2,3,4\}$.
\item[$(\rho_3)$] $|s\rho'(s)\rho''(s)|\leq C_5$ for all $s\in [0,+\infty)$ and for some $C_5>0$.
\item[$(\rho_4)$] $\rho'(s)\geq -\sqrt{2}\rho''(s)s\geq 0$ for all $s\in[0,+\infty)$.
\item[$(\rho_5)$] $2\rho''(s) + \rho'''(s)s\leq 0$ for all $s\in[0,+\infty)$.
\end{itemize}

Though the assumptions $(\rho_1)-(\rho_5)$ are very technical, each of them is necessary to state and prove our main result obtained in this paper.
\begin{lemma}
The following functions $\rho:[0,+\infty)\to\mathbb{R}$ satisfy the assumptions $(\rho_1)-(\rho_5)$:
\begin{itemize}
\item[a.] $\rho(s)=a+bs$ with $a,b\geq 0$;
\item[b.] $\rho(s)=(1+s)^{1/2}$;
\item[c.] $\rho(s)=a+bs+(1+s)^{1/2}$;
\item[d.] $\rho(s)=(1+s)^\alpha$ with $1-1/\sqrt{2}\leq \alpha \leq 1$.
\end{itemize}
\end{lemma}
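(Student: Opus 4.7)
The plan is to verify conditions $(\rho_1)$--$(\rho_5)$ directly for each family. I would organize the work around family (d), since (b) is the specialization $\alpha=1/2$, (a) is trivial because all derivatives of order $\geq 2$ vanish, and (c) reduces to (b) once one notices that adding the affine part $a+bs$ only changes $\rho'$ by the constant $b$.

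For $\rho(s)=(1+s)^\alpha$, the first step is to record $\rho^{(i)}(s)=\alpha(\alpha-1)\cdots(\alpha-i+1)(1+s)^{\alpha-i}$. Since $\alpha\leq 1$, each exponent $\alpha-i$ with $i\geq 1$ is $\leq 0$, so $(1+s)^{\alpha-i}\leq 1$ on $[0,+\infty)$, which immediately yields $(\rho_1)$ and $(\rho_2)$. For $(\rho_3)$, the product $s\rho'(s)\rho''(s)$ equals a constant times $s(1+s)^{2\alpha-3}$; because $2\alpha-3\leq -1$ for $\alpha\leq 1$, this is bounded by $s/(1+s)\leq 1$.

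The only delicate step is $(\rho_4)$, which is where the restriction $\alpha\geq 1-1/\sqrt{2}$ is forced. The sign conditions $\rho'(s)\geq 0$ and $-\sqrt{2}\rho''(s)s\geq 0$ follow from $0\leq \alpha\leq 1$. The remaining inequality $\rho'(s)\geq -\sqrt{2}\rho''(s)s$ reduces, after dividing both sides by $\alpha(1+s)^{\alpha-2}>0$, to
$$1+s\geq \sqrt{2}(1-\alpha)s,$$
which holds for every $s\geq 0$ if and only if $\sqrt{2}(1-\alpha)\leq 1$. For $(\rho_5)$, a direct factoring gives
$$2\rho''(s)+\rho'''(s)s=\alpha(\alpha-1)(1+s)^{\alpha-3}(2+\alpha s),$$
whose sign is nonpositive because $\alpha(\alpha-1)\leq 0$ on $[0,1]$ while the other factors are nonnegative.

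It remains to address (a) and (c). Family (a) makes $(\rho_2)$--$(\rho_5)$ trivial since $\rho''\equiv 0$. For (c), the derivatives of order $\geq 2$ coincide with those of $(1+s)^{1/2}$, so $(\rho_2)$, $(\rho_3)$ and $(\rho_5)$ follow from the analysis of (d) at $\alpha=1/2$; only $(\rho_4)$ needs attention, and it reduces to checking $b+\tfrac12(1+s)^{-1/2}\geq \tfrac{\sqrt{2}}{4}s(1+s)^{-3/2}$ for $s\geq 0$ and $b\geq 0$, which is already true at $b=0$ because $\tfrac{\sqrt{2}}{4}<\tfrac12$ and adding a nonnegative term preserves the inequality. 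The sole subtlety in the whole proof is recognizing that the threshold $1-1/\sqrt{2}$ in family (d) is dictated precisely by $(\rho_4)$; everything else is routine differentiation and sign-checking.
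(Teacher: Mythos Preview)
Your verification is correct and is exactly the ``straightforward calculation'' the paper invokes without spelling out; organizing the work around family (d) and then specializing is a clean way to do it. One very minor slip: for family (c) you say $(\rho_3)$ follows because the derivatives of order $\geq 2$ agree with those of $(1+s)^{1/2}$, but $(\rho_3)$ also involves $\rho'$, which now carries an extra $b$; this is harmless since the additional term $bs\rho''(s)$ is bounded, but you should say so.
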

\begin{proof}
This is a straightforward calculation.
\end{proof}
\begin{remark}
Hypothesis $(V_2)$ allows potential $V$ to change signal and, in addition, to occur $\dis\inf_{\mathbb{R}^N}V(x)=-\infty$.
\end{remark}
\begin{remark}\label{rem12}
Condition $(f_2)$ ensures that $0<t\mapsto f(t)/t$ is increasing (and so $0<t\mapsto f(t)$ as well). However, we will need this assumption only one time in the format presented in $(f_2)$. Hypothesis $(f_4)$, in its turn, shows that $f(t)\to +\infty$ as $t\to +\infty$ and, consequently,
$$
\dis\lim_{t\to +\infty} \dfrac{F(t)}{t^{p}}=\dfrac{m}{p}>0.
$$ 
Thus, since $4<p$, necessarily
$$
\dis\lim_{t\to +\infty} \dfrac{F(t)}{t^{4}}=+\infty.
$$

\end{remark}

\begin{remark}\label{obs3}
Hypothesis $(\rho_4)$ implies that $\rho'(s)\geq -\sqrt{2}\rho''(s)s\geq -\rho''(s)s\geq 0$.
\end{remark}

In the sequel, we set precisely up the result obtained.
\begin{theorem}\label{main}
Under conditions $(V_1),(V_2)$,$(f_1)-(f_4)$ and $(\rho_1)-(\rho_5)$, problem $(P_\lambda)$ has a nontrivial radially symmetric ground state  solution $u_\lambda\in\hd$.
\end{theorem}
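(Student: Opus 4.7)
The approach is purely variational, based on the Nehari manifold restricted to radially symmetric functions. The natural energy associated with $(P_\lambda)$ is
\[
I_\lambda(u)=\frac{1}{2}\int_{\R^N}\bigl(|\Delta u|^2+|\nabla u|^2+V(x)u^2\bigr)dx+\lambda\int_{\R^N}\bigl(\rho'(u^2)\bigr)^2u^2|\nabla u|^2\,dx-\int_{\R^N}F(u)\,dx.
\]
I would first check that $I_\lambda\in C^1(\hrad,\R)$, using $(\rho_2)$--$(\rho_3)$ together with the Sobolev embeddings of $\hd$ and the dimensional restriction $3\leq N\leq 6$, which is exactly what lets the quasilinear term be handled directly, without a dual change of variables. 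For the quadratic form I would use $(V_2)$ together with H\"older and the Sobolev inequality to write
\[
\int_{\R^N}V^- u^2\,dx\leq |V^-|_{N/2}\,|u|_{2^*}^2\leq \frac{|V^-|_{N/2}}{S}|\nabla u|_2^2,
\]
so that $Q(u):=\int(|\Delta u|^2+|\nabla u|^2+Vu^2)\,dx$ is coercive and equivalent to $\|u\|^2$ modulo a term coming from $V-V_\infty$, which by $(V_1)$ behaves as a compact perturbation against the radial compact embedding $\hrad\hookrightarrow L^q(\R^N)$ for $2<q<2_*$---my main compactness tool.

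The heart of the argument is the Nehari set $\mathcal N_\lambda:=\{u\in\hrad\setminus\{0\}:I'_\lambda(u)u=0\}$. For each $u\neq 0$, the critical equation of the fiber map $\phi_u(t):=I_\lambda(tu)$ rewrites as
\[
Q(u)+4\lambda t^2\!\int u^2|\nabla u|^2\,\rho'(t^2u^2)\bigl[\rho'(t^2u^2)+t^2u^2\rho''(t^2u^2)\bigr]dx=\int\frac{f(tu)}{tu}\,u^2\,dx.
\]
By $(\rho_4)$ (see Remark \ref{obs3}) the bracket is nonnegative; $(\rho_5)$ yields nondecreasing behavior of the LHS in $t$; $(f_2)$ makes the RHS strictly increasing in $t$, $(f_1)$ makes it vanish at $t=0^+$, and $(f_4)$ with $p>4$ makes it blow up faster than the at-most-quadratic LHS. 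Hence $\phi_u$ has a unique positive global maximum $t_u$, giving a well-defined projection onto $\mathcal N_\lambda$. Exploiting $(f_3)$ I would then derive
\[
I_\lambda(u)-\tfrac14 I'_\lambda(u)u\geq\tfrac14 Q(u)+\lambda\cdot(\text{nonnegative }\rho\text{-piece})+\int_{\R^N}\!\bigl(\tfrac14 f(u)u-F(u)\bigr)dx\geq \tfrac14 Q(u),
\]
so that $c_\lambda:=\inf_{\mathcal N_\lambda}I_\lambda>0$ and every minimizing sequence is bounded in $\hrad$.

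Finally, Ekeland's principle on $\mathcal N_\lambda$ upgrades a minimizing sequence to a Palais--Smale sequence $(u_n)$ at level $c_\lambda$; passing to a subsequence, $u_n\weakto u_\lambda$ in $\hrad$ and $u_n\to u_\lambda$ strongly in $L^q$ for $2<q<2_*$. This handles $\int F(u_n)$, $\int f(u_n)u_n$ and the $(V-V_\infty)u_n^2$ piece. The step I expect to be the main obstacle is passing to the limit in the quasilinear energy $\int(\rho'(u_n^2))^2u_n^2|\nabla u_n|^2$, since $\nabla u_n$ only converges weakly in $L^2$; here I would use $(\rho_2)$, pointwise a.e.\ convergence (along a further subsequence) and the strong $L^q$ convergence to prove $(\rho'(u_n^2))^2u_n^2\to(\rho'(u_\lambda^2))^2u_\lambda^2$ in a suitable $L^r$ dual-paired with $|\nabla u_n|^2$, combine this with a Br\'ezis--Lieb splitting and Fatou's lemma for the $|\nabla u|^2$ factor to recover lower semicontinuity of the quasilinear piece, and invoke $(\rho_4)$ to absorb the sign-indefinite cross terms arising from $\rho''$. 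Once this is in place, the uniform lower bound $\|u\|>\mathrm{const}>0$ on $\mathcal N_\lambda$ ensures $u_\lambda\neq 0$, and fiber-map uniqueness together with weak lower semicontinuity force $u_\lambda\in\mathcal N_\lambda$ with $I_\lambda(u_\lambda)=c_\lambda$, yielding the desired radial ground state.
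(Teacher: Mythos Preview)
Your proposal is correct and follows essentially the same approach as the paper: variational framework on the Nehari manifold restricted to $\hrad$, equivalence of the $V$-norm via $(V_2)$, Ekeland's principle to obtain a $(PS)_{c_\lambda}$ sequence, compact radial embedding to rule out vanishing, and Fatou-type lower semicontinuity to conclude. The only notable difference is in the closing step: rather than arguing lower semicontinuity of $I_\lambda$ directly and invoking fiber-map uniqueness, the paper applies Fatou's lemma to the nonnegative expression $I_\lambda(u)-\tfrac14 I'_\lambda(u)u$ (exactly your displayed inequality), which sidesteps any separate analysis of the quasilinear term and makes the uniqueness of $t_u$ unnecessary.
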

\section{Preliminaries and variational framework}

A direct consequence from conditions $(f_1)$ and $(f_4)$ is: for all $\varepsilon>0$, there is a constant $C_\varepsilon>0$ such that, if $s\in \mathbb{R}$, then 
\begin{equation}\label{<e}
|f(s)|\leq \varepsilon |s| + C_\varepsilon|s|^p \ \ \ \ \textrm{and} \ \ \ \ \ |F(s)|\leq \varepsilon|s|^2+C_\varepsilon|s|^{p+1}.
\end{equation}

To tackle the kind of problem $(P_\lambda)$ via variational approach, some difficulties appear naturally. The first that we point our is, since $V^-$ may not be a $L^\infty(\mathbb{R}^N)$-function, we need to show that the integral $\im V(x)u^2dx$ is finite for all $u\in\hd$. The following result proves it and one more important fact. 
\begin{lemma}\label{equiV}
Conditions $(V_1)$ and $(V_2)$ imply that the quadratic form
$$
u\mapsto ||u||^2_V := \im (|\Delta u|^2+|\nabla u|^2 + V(x)u^2)dx
$$
defines a norm in $\hd$, which is equivalent to the norm $\|\cdot\|$.
\end{lemma}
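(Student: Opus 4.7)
The strategy is to establish the two-sided comparison $c\|u\| \leq \|u\|_V \leq C\|u\|$ on $\hd$. The upper bound is the easy half: assumption $(V_1)$ together with the local boundedness of $V$ implicit in the setup gives $V^+ \in L^\infty(\R^N)$, since $V^+$ is bounded on compacta and tends to $V_\infty$ at infinity, so
\[
\|u\|_V^2 \leq \im\bigl(|\Delta u|^2+|\nabla u|^2\bigr)\,dx + |V^+|_\infty\, |u|_2^2 \leq C\|u\|^2.
\]
Once the matching lower bound is proved, $\|\cdot\|_V$ and $\|\cdot\|$ are equivalent; in particular the quadratic form is positive definite (hence a genuine norm), and $\im V(x)u^2\,dx$ is automatically well defined on $\hd$.

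For the lower bound, the central step is controlling the negative part of $V$. H\"older's inequality with exponents $N/2$ and $N/(N-2)$, followed by the Sobolev embedding $D^{1,2}(\R^N) \hookrightarrow L^{2^*}(\R^N)$, yields
\[
\im V^-(x)u^2\,dx \leq |V^-|_{N/2}\,|u|_{2^*}^2 \leq \frac{|V^-|_{N/2}}{S}\,|\nabla u|_2^2,
\]
so by $(V_2)$ the constant $\theta := 1 - |V^-|_{N/2}/S$ is strictly positive, and
\[
\|u\|_V^2 \geq \im |\Delta u|^2\,dx + \theta \im |\nabla u|^2\,dx + \im V^+(x)\, u^2\,dx.
\]
It remains to dominate $|u|_2^2$ by the right-hand side. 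Here I would invoke $(V_1)$ to choose $R>0$ so that $V(x) \geq V_\infty/2$ (and hence $V^+(x)\geq V_\infty/2$) for $|x|\geq R$, and then split
\[
\im u^2\,dx = \int_{B_R} u^2\,dx + \int_{\R^N\setminus B_R} u^2\,dx.
\]
The outer piece is bounded by $(2/V_\infty)\im V^+(x) u^2\,dx$, while the inner piece is bounded, via H\"older and Sobolev on $B_R$, by a constant multiple of $|\nabla u|_2^2$. Putting this together with the previous display gives $\|u\|^2 \leq C'\|u\|_V^2$.

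The main difficulty lies precisely in the lower bound: a priori the $V^-$-contribution could absorb the whole Dirichlet energy $|\nabla u|_2^2$, and it is the strict inequality $|V^-|_{N/2} < S$ in $(V_2)$ that reserves a positive fraction $\theta$ of the gradient. This surviving fraction, combined with the positive mass at infinity furnished by $(V_1)$, is what ultimately controls $|u|_2^2$ and delivers the coercivity of $\|\cdot\|_V$.
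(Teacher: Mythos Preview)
Your proof is correct and follows the same strategy as the paper: both control the $V^-$-contribution via H\"older and the Sobolev inequality using $(V_2)$, reducing matters to the equivalence of $\|\cdot\|$ with the norm built from $|\Delta u|^2+|\nabla u|^2+V^+(x)u^2$. The paper outsources this last step to Lemma~2.1 in \cite{FMM}, whereas you supply the argument explicitly via the decomposition $\mathbb{R}^N = B_R \cup B_R^c$ together with $(V_1)$.
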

\begin{proof}
Let $u\in\hd$. By H\"older and Gagliardo-Nirenberg inequalities, one has
$$
\im V^-(x)u^2dx \leq |V^-|_{N/2}|u|_{2^*}^{2} \leq \dfrac{|V^-|_{N/2}}{S}\im |\nabla u|^2dx.
$$
This implies that 
\begin{equation}\label{eqV11}\begin{array}{rcl}
\im (|\Delta u|^2+|\nabla u|^2 + V(x)u^2)dx & \geq& \im (|\Delta u|^2+\left(1-\dfrac{|V^-|_{N/2}}{S}\right)|\nabla u|^2 + V^+(x)u^2)dx.\\
& \geq & \left(1-\dfrac{|V^-|_{N/2}}{S}\right)\im (|\Delta u|^2+|\nabla u|^2 + V^+(x)u^2)dx
\end{array}
\end{equation}
Once we clearly have 
$$
\im (|\Delta u|^2+|\nabla u|^2 + V(x)u^2)dx \leq \im (|\Delta u|^2+|\nabla u|^2 + V^+(x)u^2)dx,
$$
in view of $(\ref{eqV11})$, it is enough to show that the function
$$
u\mapsto \im (|\Delta u|^2+|\nabla u|^2 + V^+(x)u^2)dx
$$
defines an equivalent norm in $\hd$. The arguments in the proof of the Lemma 2.1 in \cite{FMM} may be applied to get this. The lemma is proved.
\end{proof}

The next result shows another difficulty found and how to overcome it. The variational approach to problem $(P_\lambda)$ is guaranteed as well.

\begin{proposition}\label{welldI}
Under assumptions $(V_1)$,$(V_2)$, $(f_1)$,$(f_4)$ and $(\rho_1)-(\rho_3)$, the Euler-Lagrange  functional $I_\lambda:\hd \to \mathbb{R}$ associated to problem $(P_\lambda)$ is given by 
$$
I_\lambda(u) = \dfrac{1}{2}\im (|\Delta u|^2 + |\nabla u|^2 + V(x)u^2)dx +\dfrac{\lambda}{4} \im |\nabla\rho( u^2)|^2dx - \im F(u)dx.
$$
Moreover, $I_\lambda\in C^1(\hd,\mathbb{R})$ and
$$
I_\lambda'(u)\varphi=\im(\Delta u\Delta \varphi + \nabla u\nabla \varphi + V(x)u\varphi)dx + \lambda\im \nabla (\rho(u^2))\cdot \nabla(\rho'(u^2)u\varphi) dx - \im f(u)\varphi.
$$
for all $\varphi\in\hd$.
\end{proposition}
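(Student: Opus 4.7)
The plan is to establish, in order, (i) that each term defining $I_\lambda$ is a finite real number for every $u\in\hd$; (ii) that the G\^ateaux derivative $I_\lambda'(u)$ is given by the stated formula; and (iii) that $u\mapsto I_\lambda'(u)\in(\hd)^\ast$ is norm-continuous, which upgrades G\^ateaux to Fr\'echet differentiability and yields $I_\lambda\in C^1$. The two recurring tools are the embedding $\hd\hookrightarrow L^q(\mathbb{R}^N)$, valid for $q\in[2,2_*]$ (which, precisely because $3\le N\le 6$, contains $q=N$ as well as $q=p$ and $q=p+1$) and the fact that $\nabla u\in W^{1,2}(\mathbb{R}^N)\hookrightarrow L^{2^*}(\mathbb{R}^N)$.

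For (i), the quadratic form is finite by Lemma~\ref{equiV}. For the quasilinear piece I would write $\nabla\rho(u^2)=2\rho'(u^2)u\,\nabla u$, so that $(\rho_2)$ gives $|\nabla\rho(u^2)|^2\le 4C_1^2\,u^2|\nabla u|^2$; then H\"older with the conjugate pair $(N/2,N/(N-2))$ yields $\im u^2|\nabla u|^2\,dx\le |u|_N^2|\nabla u|_{2^*}^2\le C\|u\|^4$. Finiteness of $\im F(u)\,dx$ follows from (\ref{<e}) and the Sobolev embeddings.

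For (ii), the quadratic and $F$-terms are standard (mean value theorem plus dominated convergence). The quasilinear term is the heart of the matter: a formal computation gives
$$
\frac{d}{dt}\bigg|_{t=0}\frac{1}{4}\im|\nabla\rho((u+t\varphi)^2)|^2\,dx=\im\nabla\rho(u^2)\cdot\nabla(\rho'(u^2)u\varphi)\,dx,
$$
and to justify the interchange of limit and integral I would expand
$$
\nabla(\rho'(u^2)u\varphi)=2u^2\rho''(u^2)\,\varphi\nabla u+\rho'(u^2)\,\varphi\nabla u+\rho'(u^2)\,u\nabla\varphi.
$$
Combined with $\nabla\rho(u^2)=2\rho'(u^2)u\nabla u$ this produces three integrands which, by $(\rho_2)$ and (crucially) $(\rho_3)$ --- which makes $|u^3\rho'(u^2)\rho''(u^2)|\le C_5|u|$ --- are dominated by constant multiples of $|u||\varphi||\nabla u|^2$ or $u^2|\nabla u||\nabla\varphi|$. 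The same H\"older estimate as in (i) produces the required $L^1$ dominating functions and shows, simultaneously, linearity and continuity of $\varphi\mapsto I_\lambda'(u)\varphi$.

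For (iii), take $u_n\to u$ in $\hd$. Up to a subsequence, $u_n\to u$ a.e.\ and in $L^q$ for $q\in[2,2_*]$, while $\nabla u_n\to\nabla u$ in $L^2$ and in $L^{2^*}$. Continuity of $\rho',\rho'',f$ together with the growth bounds from $(\rho_2),(\rho_3)$ and (\ref{<e}) gives strong convergence of each Nemytskii-type operator appearing in $(I_\lambda'(u_n)-I_\lambda'(u))\varphi$ in the appropriate $L^r$, and H\"older with exponents as in (i)--(ii) then yields $\|I_\lambda'(u_n)-I_\lambda'(u)\|_{(\hd)^\ast}\to 0$. The main obstacle will be the bookkeeping of the several cross terms generated by the quasilinear part: each must be paired with a dual $L^r$ space in which the corresponding sequence of nonlinear composites converges. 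The restriction $3\le N\le 6$ is exactly what makes $\hd\hookrightarrow L^N$ available and allows every such term to be closed off.
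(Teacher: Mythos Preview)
Your proposal is correct and follows essentially the same route as the paper: both isolate the quasilinear term $\Phi(u)=\tfrac14\int|\nabla\rho(u^2)|^2\,dx$, bound it via $(\rho_2)$--$(\rho_3)$ and H\"older inequality, compute $\Phi'(u)\varphi$ by expanding $\nabla(\rho'(u^2)u\varphi)$ into the same three pieces, and prove continuity of $\Phi'$ by dominated convergence on those cross terms. The only cosmetic difference is the H\"older pairing --- the paper uses the fixed exponent pair $(3,3/2)$, i.e.\ $|u|_6^2\,|\nabla u|_3^2$ (closed via $\hd\hookrightarrow W^{1,r}$ for $r\le 2^*$), while you use the dimension-dependent pair $(N/2,N/(N-2))$, i.e.\ $|u|_N^2\,|\nabla u|_{2^*}^2$; both are valid for $3\le N\le 6$ and coincide at $N=6$.
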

\begin{proof}
Let us concern only with the term $\Phi(u):=\dfrac{1}{4}\im |\nabla \rho(u^2)|^2dx$. Since 
\begin{eqnarray}\label{convw}
\hd \hookrightarrow W^{1,r}(\mathbb{R}^N), \ \textrm{for}\  2\leq r \leq 2^* \ \ \textrm{and} \\
\hd\hookrightarrow L^s(\mathbb{R}^N),  \  \textrm{for} \ 2\leq s \leq 2_*,\nonumber
\end{eqnarray}
we have from hypothesis $(\rho_2)$ that, if $u\in\hd$, then
\begin{eqnarray*}
\dfrac{1}{4}\im |\nabla \rho(u^2)|^2dx  =  \im [\rho'(u^2)]^2u^2|\nabla u|^2 dx \leq C_1^2\im u^2|\nabla u|^2 dx,
\end{eqnarray*}
what implies by using H\"older inequality with exponents $p=3$ and $q=p/(p-1)=3/2$ that
$$
\dfrac{1}{4}\im |\nabla \rho(u^2)|^2dx \leq C_1^2 \left( \im u^6 dx \right)^{1/3}\left( \im |\nabla u|^{3} dx \right)^{2/3} < +\infty.
$$
So, $\Phi$ is well defined in $\hd$. Supposing, now, that $u$ is a classical solution for $(P_\lambda)$, i.e., $u\in C^4(\mathbb{R}^N,\mathbb{R})$ satisfies pointwise $(P_\lambda)$, consider $\varphi\in C_0^\infty(\mathbb{R}^N,\mathbb{R})$ and note that, by Divergence Theorem applied to the vector field $\textbf{V} = \nabla (\rho(u^2))\rho'(u^2)u\varphi$, one has
$$
\dis\int_{B_R} \textrm{div}  [\nabla (\rho(u^2))\rho'(u^2)u\varphi] dx = \dis\int_{B_R} \textrm{div} \ \textbf{V} dx = \dis\int_{\partial B_R} \textbf{V}\cdot \eta ds = 0,
$$ 
where $B_R$ is the ball centered in $0$ with radius $R>0$ large enough so that $\textrm{supp}\ \varphi \subset B_R$ and $\eta(x)$ is the normal vector in $x$ to $\partial B_R$, boundary of $B_R$. We may conclude that $\im \textrm{div}  [\nabla (\rho(u^2))\rho'(u^2)u\varphi] dx = 0$, i.e.,
$$
\im -\Delta[\rho(u^2)]\rho'(u^2)u\varphi dx = \im \nabla (\rho(u^2))\cdot \nabla(\rho'(u^2)u\varphi) dx.
$$
A simple but wearing calculation shows that $\Phi'(u)\varphi = \im \nabla (\rho(u^2))\cdot \nabla(\rho'(u^2)u\varphi) dx$ for all $\varphi\in C_0^\infty(\mathbb{R}^N,\mathbb{R})$. Let us check that $\Phi'$ is continuous. Consider $u_n\to u$ in $\hd$ and note that, if $\varphi\in\hd$ with $||\varphi||\leq 1$, one has

\begin{eqnarray}\label{c1phi}
|\Phi'(u_n)\varphi-\Phi(u)\varphi| &= &\Big|\im \nabla (\rho(u_n^2))\cdot \nabla(\rho'(u_n^2)u_n\varphi) dx - \im \nabla (\rho(u^2))\cdot \nabla(\rho'(u^2)u\varphi) dx \Big| \nonumber \\
& = & \Big| \im 4u_n^3\rho'(u_n^2)\rho''(u_n^2)|\nabla u_n|^2 \varphi dx - \im 4u^3\rho'(u^2)\rho''(u^2)|\nabla u|^2 \varphi dx \nonumber \\
& & + \im 2[\rho'(u_n^2)]^2u_n|\nabla u_n|^2\varphi dx - \im 2[\rho'(u^2)]^2u|\nabla u|^2\varphi dx  \\
& & + \im 2[\rho'(u_n^2)]^2u_n^2\nabla u_n\nabla \varphi dx -  \im 2[\rho'(u^2)]^2u^2\nabla u\nabla \varphi dx \Big|. \nonumber
\end{eqnarray}
We shall estimate each one of the three differences in (\ref{c1phi}). For the first one, set the functions $g_n:=u_n^2\rho'(u_n^2)\rho''(u_n^2)$ and $g_0:=u^2\rho'(u^2)\rho''(u^2)$. Thus, if we call
\begin{eqnarray*}
\mathcal{I}_{n}^{(1)}\varphi = \im 4u_n^3\rho'(u_n^2)\rho''(u_n^2)|\nabla u_n|^2 \varphi dx - \im 4u^3\rho'(u^2)\rho''(u^2)|\nabla u|^2 \varphi dx, 
\end{eqnarray*}
we have by H\"older inequality and hypothesis $(\rho_3)$ that
\begin{eqnarray}\label{1<123}
|\mathcal{I}_{n}^{(1)}\varphi| &=& 4 \Big| \im u_n g_n | \nabla u_n|^2 \varphi dx - \im ug_0|\nabla u|^2 \varphi dx \Big| \nonumber \\
&\leq &4 \im |u_ng_n|\left| |\nabla u_n|^2 - |\nabla u|^2\right||\varphi| dx + 4\im |u_ng_n-ug_0||\nabla u|^2|\varphi| dx \nonumber \\
&\leq & 4C_5\left(\im |u_n|^3|\varphi|^3 dx\right)^{1/3}\left( \im \left( |\nabla u_n| + |\nabla u| \right)^3 dx \right)^{1/3}\left( \im \left| \nabla u_n - \nabla u \right|^3 dx \right)^{1/3}  \\
& & + 4 \left(\im  |u_ng_n - ug_0|^6 dx \right)^{1/6} \left(\im |\varphi|^6 dx \right)^{1/6} \left(\im |\nabla u|^3 dx \right)^{2/3}. \nonumber
\end{eqnarray}
From $(\rho_1)$ and the convergence $u_n(x)\to u(x)$ a.e. $x\in\mathbb{R}^N$, one has $g_n(x)\to g_0(x)$ a.e. $x\in\mathbb{R}^N$. By using this, condition $(\rho_3)$ and the embeddings in (\ref{convw}), we may apply Lebesgue Theorem to obtain
$$
\im  |u_ng_n - ug_0|^6 dx \to 0
$$
as $n\to +\infty$. This and the embeddings in (\ref{convw}) one more time transform (\ref{1<123}) in
\begin{eqnarray*}
|\mathcal{I}_{n}^{(1)}\varphi| &\leq& C \left( \im \left| \nabla u_n - \nabla u \right|^3 dx \right)^{1/3}\left(\im |\varphi|^6 dx \right)^{1/6} \\
& & + C\left(\im  |u_ng_n - ug_0|^6 dx \right)^{1/6} \left(\im |\varphi|^6 dx \right)^{1/6}  \\
&\leq& o_n(1)
\end{eqnarray*}
for some $C>0$ and uniformly on $||\varphi||\leq 1$ as $n\to +\infty$. Similar arguments may be done to prove that the other two differences in (\ref{c1phi}) also goes to zero uniformly on $||\varphi||\leq 1$ as $n\to +\infty$. This shows that $\Phi'$ is continuous and, consequently, $\Phi$ is a $C^1$ functional, as we wished to prove.
\end{proof}

By Lemma \ref{equiV} and Proposition \ref{welldI}, $I_\lambda$ is a $C^1$-functional and, for each $u\in\hd$, we may write
$$
I_\lambda(u) = \dfrac{1}{2}||u||_V^2 + \dfrac{\lambda}{4}\im |\nabla \rho(u^2)|^2dx - \im F(u)dx.
$$

We mean that a function $u\in\hd$ is a weak solution for problem $(P_\lambda)$ if, for all $\varphi\in\hd$, it holds $I_\lambda'(u)\varphi=0$. Thus, nontrivial weak solutions for problem $(P_\lambda)$ are critical points of functional $I_\lambda$ and all of them are contained in the Nehari set
$$
\mathcal{N} = \{u\in\hd\setminus\{0\}; \ I_\lambda'(u)u=0\}.
$$

To prove Theorem \ref{main}, we will use the standard arguments of minimization on Nehari manifold, showing that the infimum $m:=\dis\inf_{u\in\mathcal{N}}I_\lambda(u)$ is well defined and is achieved at some nontrivial solution for problem $(P_\lambda)$.

As already said, in this problem, the lack of compactness, when we work on unbounded domain as $\mathbb{R}^N$, will be overcome by the symmetry of the problem. We restrict our functional $I_\lambda$ on the subspace contained in $\hd$ of the radially symmetric functions, i.e., the space $\hum$ of the functions $u\in\hd$ satisfying $u(x)=u(|x|)$ for all $x\in\mathbb{R}^N$. Since the properties of this space are used only in the end of this work, we decide to develop all the survey considering the whole space $\hd$ and, as soon as these properties are needed, we point out this fact.

\section{Minimization arguments and proof of Theorem \ref{main}}

We wish minimize the functional $I_\lambda$ on the set $\mathcal{N}$. For this, we present some property of the Nehari set.
\begin{lemma}
If $u\in\hd\setminus\{0\}$, then there exists $t_u>0$ such that $t_u u\in\mathcal{N}$. In particular, $\mathcal{N}\neq \emptyset$.
\end{lemma}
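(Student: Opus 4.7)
The plan is to fix $u\in\hd\setminus\{0\}$ and study the real-valued fibering map
$$
h_\la(t):=I_\la(tu)=\frac{t^2}{2}\|u\|_V^2+\la t^4\im[\rho'(t^2u^2)]^2u^2|\nabla u|^2\,dx-\im F(tu)\,dx,\quad t\ge 0.
$$
Since $I_\la\in C^1$, the map $h_\la$ is $C^1$ on $[0,\infty)$, with $h_\la(0)=0$ and $h_\la'(t)=I_\la'(tu)u$. To exhibit $t_u>0$ with $t_uu\in\mathcal{N}$ it suffices to find a critical point $t_u>0$ of $h_\la$, since then $I_\la'(t_uu)(t_uu)=t_u\,h_\la'(t_u)=0$.

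First I would show that $h_\la(t)>0$ for all sufficiently small $t>0$. Using the first inequality in (\ref{<e}) with a small $\varepsilon$, together with the embedding $\hd\hookrightarrow L^{p+1}(\R^N)$ provided by (\ref{convw}) (valid because $p+1\le 2_*$), I obtain
$$
\im F(tu)\,dx\le \varepsilon t^2\im u^2\,dx+C_\varepsilon t^{p+1}\im|u|^{p+1}\,dx.
$$
Since the quasilinear integral is nonnegative and $\|\cdot\|_V$ is equivalent to $\|\cdot\|$ by Lemma~\ref{equiV}, choosing $\varepsilon$ small enough gives
$h_\la(t)\ge \tfrac14\|u\|_V^2 t^2 - C t^{p+1}$, which is strictly positive for small $t$ because $p+1>2$.

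Next I would show that $h_\la(t)\to-\infty$ as $t\to+\infty$. The bound $|\rho'|\le C_1$ from $(\rho_2)$ yields
$$
\la t^4\im[\rho'(t^2u^2)]^2u^2|\nabla u|^2\,dx\le \la C_1^2 t^4\im u^2|\nabla u|^2\,dx,
$$
so dividing $h_\la(t)$ by $t^4$ gives
$$
\frac{h_\la(t)}{t^4}\le \frac{\|u\|_V^2}{2t^2}+\la C_1^2\im u^2|\nabla u|^2\,dx-\im\frac{F(tu)}{t^4u^4}u^4\,dx.
$$
By Remark~\ref{rem12}, $F(s)/s^4\to+\infty$ as $|s|\to\infty$; writing $F=F^+-F^-$ and noting that the positivity of $f$ beyond some threshold makes $F^-$ bounded, Fatou's lemma applied on $\{u\ne 0\}$ forces the last integral to tend to $+\infty$. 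Thus $h_\la(t)/t^4\to-\infty$, in particular $h_\la(t)\to-\infty$.

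Combining the three facts (continuity, positivity on a right neighborhood of $0$, and $h_\la(t)\to-\infty$), the function $h_\la$ attains its global maximum on $[0,\infty)$ at some interior point $t_u>0$. Then $h_\la'(t_u)=0$, so $I_\la'(t_uu)(t_uu)=0$ and $t_uu\in\mathcal{N}$, giving $\mathcal{N}\ne\emptyset$. The main obstacle is the asymptotic estimate at $t\to\infty$: the quasilinear term contributes a nonnegative $O(t^4)$ piece, and one must use $(f_4)$ (equivalently, Remark~\ref{rem12}) to guarantee that $\int F(tu)\,dx$ grows faster than $t^4$ so that it overwhelms both the Sobolev and the quasilinear contributions.
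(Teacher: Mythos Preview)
Your proposal is correct and follows essentially the same route as the paper: define the fibering map $t\mapsto I_\la(tu)$, show it is positive for small $t>0$ (you use the explicit estimate~(\ref{<e}), the paper uses $(f_1)$ to write $\int F(tu)/t^2\,dx=o_t(1)$), then use $(\rho_2)$ to bound the quasilinear term by $O(t^4)$ and Remark~\ref{rem12} to show $\int F(tu)\,dx$ grows faster than $t^4$, so the map tends to $-\infty$ and must attain an interior maximum. Your treatment of the $t\to+\infty$ asymptotics via Fatou on $\{u\ne0\}$ is in fact a bit more careful than the paper's, which leaves that limit implicit.
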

\begin{proof}
For $t\geq 0$, consider the $C^1$-function $g(t)=I_\lambda(tu)$. Then, we see from hypothesis $(f_1)$ that
$$
g(t)= \geq \dfrac{t^2}{2}\left(||u||_V^2-\im \dfrac{F(tu)}{t^2}dx \right) = \dfrac{t^2}{2}\left(||u||_V^2-o_t(1) \right),
$$ 
as $t\to 0^+$. Therefore, for small values of $t>0$, $g(t)>0$. On the other side, from conditions $(f_3)$ and $(\rho_2)$, one has
\begin{eqnarray}
g(t) &=& \dfrac{t^2}{2}||u||_V^2 + t^4\im [\rho'(t^2u^2)]^2u^2|\nabla u|^2 dx - \im F(tu)dx \nonumber \\
& \leq & \dfrac{t^4}{2}\left(o_t(1) + C_1^2\im u^2|\nabla u|^2dx - \im \dfrac{F(tu)}{(tu)^4}u^4dx \right) \to -\infty,
\end{eqnarray}
as $t\to +\infty$. Thus, $g$ assume a maximum, say $t_u>0$, where $g'(t_u)=I_\lambda'(t_u u)u=0$, that is, $t_uu\in\mathcal{N}$, as we wished.
\end{proof}
\begin{lemma}\label{manif}
The set $\mathcal{N}$ is a $C^1$-manifold.
\end{lemma}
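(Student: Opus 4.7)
The plan is to realize $\mathcal{N}$ as the regular level set $J_\lambda^{-1}(\{0\})\cap(\hd\setminus\{0\})$, where $J_\lambda(u):=I_\lambda'(u)u$, and to apply the regular value theorem. Two points must be verified: (a) $J_\lambda\in C^1(\hd,\R)$, and (b) $J_\lambda'(u)\ne 0$ for every $u\in\mathcal{N}$.

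For (a), writing
$$J_\lambda(u)=\|u\|_V^2+4\lambda\im u^2|\na u|^2\rho'(u^2)\bigl[\rho'(u^2)+u^2\rho''(u^2)\bigr]dx-\im f(u)u\,dx,$$
the quadratic part is smooth, and the Nemytskii term $u\mapsto\int f(u)u\,dx$ is $C^1$ since $f\in C^2(\R,\R)$ and $(f_1),(f_4)$ yield pointwise control of $f,f'$ of the same type as \eqref{<e}. The quasilinear term is $C^1$ by the same strategy used for $\Phi'$ in Proposition \ref{welldI}: its integrand involves $\rho^{(k)}$ for $k\le 3$, all available by $(\rho_1)$ and bounded by $(\rho_2)$–$(\rho_3)$, and the embeddings $\hd\hoo W^{1,r}(\R^N)$ for $r\in[2,2^*]$ (valid since $N\le 6$), together with Hölder's inequality and dominated convergence, give the continuity of the Fréchet derivative.

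For (b), I would aim at the sharper statement $J_\lambda'(u)u<0$ on $\mathcal{N}$. Since $J_\lambda=I_\lambda'\cdot\operatorname{id}$, differentiation gives $J_\lambda'(u)u=I_\lambda''(u)(u,u)+J_\lambda(u)$, and on $\mathcal{N}$ this reduces to $J_\lambda'(u)u=g_u''(1)$, where $g_u(t):=I_\lambda(tu)$. Direct computation followed by substitution of the Nehari identity $\|u\|_V^2+\lambda\Lambda(u)=\int f(u)u\,dx$ yields
$$g_u''(1)=-\im\bigl[f'(u)u^2-f(u)u\bigr]dx+8\lambda\im u^2|\na u|^2\chi(u^2)\,dx,$$
with $\chi(s)=[\rho'(s)+s\rho''(s)]^2+s\rho'(s)[2\rho''(s)+s\rho'''(s)]$. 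By $(f_2)$ the first integrand is pointwise $\ge\delta|u|^p$, giving a strictly negative contribution; hypothesis $(\rho_5)$ together with $s\rho'(s)\ge 0$ (Remark \ref{obs3}) makes the second summand of $\chi$ nonpositive. The remaining square $[\rho'+s\rho'']^2$ is controlled by combining the fact that $t=1$ maximizes $g_u$ (previous lemma) with the strict decrease of $t\mapsto g_u(t)/t^4$ on $(0,\infty)$---itself a consequence of $\|u\|_V^2/(2t^2)$ strictly decreasing, of $t\mapsto\int[\rho'(t^2u^2)]^2u^2|\na u|^2\,dx$ being nonincreasing since $\rho'\rho''\le 0$ by Remark \ref{obs3}, and of $t\mapsto\int F(tu)/t^4\,dx$ being nondecreasing by $(f_3)$---which together enforce $g_u''(1)\le 0$; the strict inequality then follows from the strict negativity of the $f$-contribution.

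The main obstacle is the sign analysis of $g_u''(1)$: the quasilinear contribution is in general strictly positive (for instance, for $\rho(s)=(1+s)^{1/2}$ one computes $\chi(s)=[4(1+s)^3]^{-1}>0$), so the estimate cannot be closed by pointwise sign considerations alone. The hypotheses $(f_2),(f_3),(\rho_4),(\rho_5)$ must be invoked in a coordinated way together with the maximum property of $t=1$ for $u\in\mathcal{N}$.
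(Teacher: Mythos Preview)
Your overall strategy---showing $J_\lambda\in C^1$ and $J_\lambda'(u)u<0$ on $\mathcal N$---matches the paper exactly, and part (a) is fine. The gap is in part (b).

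You correctly observe that the quasilinear contribution to $g_u''(1)$ need not be nonpositive (your computation of $\chi$ for $\rho(s)=(1+s)^{1/2}$ is right), and you try to rescue the sign via the second--derivative test at a maximum. But this step is not justified: the previous lemma only shows that $g_u$ attains a maximum at \emph{some} $t_u>0$; for $u\in\mathcal N$ you only know $g_u'(1)=0$, not that $t=1$ is that maximum. Your monotonicity of $t\mapsto g_u(t)/t^4$ does not by itself force uniqueness of critical points of $g_u$, so the implication ``$t=1$ is a max $\Rightarrow g_u''(1)\le 0$'' remains unproved. Your final paragraph in fact acknowledges that the argument is not closed.

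The paper proceeds by a purely algebraic pointwise estimate that you are missing. After expanding $J_\lambda'(u)u$, the problematic positive term is $8\lambda\!\int[\rho''(u^2)]^2u^6|\nabla u|^2\,dx$. Hypothesis $(\rho_4)$ in its precise form $\rho'(s)\ge -\sqrt{2}\,s\rho''(s)$ squares to
\[
2\bigl[s\rho''(s)\bigr]^2\le \bigl[\rho'(s)\bigr]^2,
\]
so that
\[
8\lambda\!\int[\rho''(u^2)]^2u^6|\nabla u|^2\,dx\ \le\ 4\lambda\!\int[\rho'(u^2)]^2u^2|\nabla u|^2\,dx\ =\ \lambda\!\int|\nabla\rho(u^2)|^2\,dx.
\]
Combined with $(\rho_5)$ (which kills the $\rho'(2\rho''+s\rho''')$ term) and the trivial inequality $2\!\int\rho'\rho''u^4|\nabla u|^2\le \int\rho'\rho''u^4|\nabla u|^2$ (the integrand is $\le 0$), the quasilinear part of $J_\lambda'(u)u$ is bounded above by exactly \emph{twice} the quasilinear part of the Nehari identity $J_\lambda(u)=0$. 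Substituting that identity then yields
\[
J_\lambda'(u)u\ \le\ 2\!\int f(u)u\,dx-\!\int\bigl(f'(u)u^2+f(u)u\bigr)\,dx\ =\ -\!\int\bigl(f'(u)u^2-f(u)u\bigr)\,dx\ <\ 0
\]
by $(f_2)$. No appeal to the maximum property of $t=1$ is needed; the constant $\sqrt{2}$ in $(\rho_4)$ is exactly what makes this absorption work.
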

\begin{proof}
Let $J_\lambda(u)=I'_\lambda(u)u$ for $u\in\hd$. Recalling that, for all $u\in\mathcal{N}$,
$$
I_\lambda'(u)u=||u||_V^2 + \lambda\im \nabla \rho(u^2)\cdot\nabla (\rho'(u^2)u^2) dx - \im f(u)udx = 0,
$$
and since 
$$
\lambda\im \nabla \rho(u^2)\cdot\nabla (\rho'(u^2)u^2) dx = 4\lambda\im \rho'(u^2)\rho''(u^2)u^4|\nabla u|^2 dx + \lambda\im |\nabla \rho(u^2)|^2 dx,
$$
we may write
\begin{eqnarray}\label{uinN}
J_\lambda(u) = I'_\lambda(u)u &=& ||u||_V^2 +4\lambda\im \rho'(u^2)\rho''(u^2)u^4|\nabla u|^2 dx + \lambda\im |\nabla \rho(u^2)|^2 dx \nonumber \\
&& - \im f(u)udx=0.
\end{eqnarray}
Hence, for all $u\in\mathcal{N}$, we have
\begin{eqnarray}\label{imer}
J_\lambda'(u)u & = & 2||u||_V^2+ 4\lambda\left[ 2\im \rho''(u^2)\rho''(u^2)u^6|\nabla u|^2 dx + 2\im \rho'(u^2)\rho'''(u^2)u^6|\nabla u|^2 dx\right.\nonumber \\
&& \left. + 4\im \rho'(u^2)\rho''(u^2)u^4|\nabla u |^2 dx + 2\im \rho'(u^2)\rho''(u^2)u^4|\nabla u |^2 dx\right]\nonumber \\
&&  + 4\lambda\im \rho'(u^2)\rho''(u^2)u^4|\nabla u|^2 dx + \lambda\im |\nabla \rho(u^2)|^2 dx - \im (f'(u)u^2+f(u)u)dx. 
\end{eqnarray}
Let us reorganize some terms. By hypothesis $(\rho_4)$, we obtain two inequality, namely,
\begin{equation}\label{in45}
\im 2[\rho''(u^2)u^2]^2u^2|\nabla u|^2 dx \leq \im [\rho'(u^2)]^2u^2|\nabla u|^2 dx = \dfrac{1}{4}\im|\nabla\rho(u^2)|^2 dx
\end{equation}
and, since $\rho''(t)\leq 0$ and $\rho'(t)\geq 0$,
\begin{equation}\label{iq4555}
2\im \rho'(u^2)\rho''(u^2)u^4|\nabla u |^2 dx \leq \im \rho'(u^2)\rho''(u^2)u^4|\nabla u |^2 dx.
\end{equation}
Thus, by  (\ref{in45}) and (\ref{iq4555}), we get from (\ref{imer}) that
\begin{eqnarray*}
J_\lambda'(u)u & \leq & 2||u||_V^2+ 4\lambda\left[ \dfrac{1}{4}\im|\nabla\rho(u^2)|^2 dx + \im \rho'(u^2)\rho''(u^2)u^4|\nabla u |^2 dx\right. \nonumber \\
& & \left. + 2\im \rho'(u^2)u^4|\nabla u|^2(2\rho''(u^2)+\rho'''(u^2)u^2) dx \right]\nonumber \\
&&  + 4\lambda\im \rho'(u^2)\rho''(u^2)u^4|\nabla u|^2 dx + \lambda\im |\nabla \rho(u^2)|^2 dx\nonumber \\
&&- \im (f'(u)u^2+f(u)u)dx.
\end{eqnarray*}
Now, by using condition $(\rho_5)$ and (\ref{uinN}), one has
\begin{eqnarray}\label{j<01}
J_\lambda'(u)u & \leq & 2||u||_V^2+ 8\lambda \im \rho'(u^2)\rho''(u^2)u^4|\nabla u |^2 dx + 2 \lambda\im |\nabla \rho(u^2)|^2 dx\nonumber \\
&&- \im (f'(u)u^2+f(u)u)dx \nonumber \\
&=& 2\im f(u)u dx - \im (f'(u)u^2+f(u)u)dx  \\
& = &  \im (f(u)u - f'(u)u^2)dx < 0, \nonumber
\end{eqnarray}
where we used hypothesis $(f_2)$ and the fact that $u\neq 0$. Since $J$ is a $C^1$-functional (see Remark \ref{Jc2} below), we apply the Implicit Function Theorem to guarantee that $\mathcal{N}$ is a $C^1$-manifold. The lemma is proved.
\end{proof}

\begin{lemma}\label{m>0}
There hold $||u||_V^2>c_0$ for all $u\in\mathcal{N}$, for some $c_0>0$, and $m=\dis\inf_\mathcal{N}I_\lambda(u)>0$.
\end{lemma}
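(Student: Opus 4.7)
The plan is to proceed in two steps: first establish the uniform lower bound $\|u\|_V^2 \geq c_0$ on $\mathcal{N}$, then deduce $m \geq c_0/4 > 0$ using $(f_3)$. The central algebraic trick in both steps is to package the two $\lambda$-terms in $I_\lambda'(u)u$ into a single manifestly nonnegative expression.

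Fix $u \in \mathcal{N}$. From equation (\ref{uinN}) I will rewrite
\[
\|u\|_V^2 + 4\lambda\int \rho'(u^2)u^2|\nabla u|^2\bigl[\rho''(u^2)u^2 + \rho'(u^2)\bigr]dx = \int f(u)u\,dx.
\]
The bracketed factor is nonnegative: by $(\rho_4)$, $\rho'(s) \geq -\sqrt{2}\rho''(s)s \geq -\rho''(s)s$, i.e. $\rho'(s)+\rho''(s)s\geq 0$. Combined with $\rho' \geq 0$, this shows the $\lambda$-integral is $\geq 0$, so
\[
\|u\|_V^2 \leq \int f(u)u\,dx.
\]
Next I apply (\ref{<e}) with a small $\eps>0$ to obtain $\int f(u)u\,dx \leq \eps \int u^2 dx + C_\eps \int |u|^{p+1}dx$. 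Since $(f_2)$ forces $p+1 < 2_*$ (or merely $p>4$ when $N=3$), the embedding $\hd\hookrightarrow L^2 \cap L^{p+1}(\R^N)$ together with Lemma \ref{equiV} yields
\[
\|u\|_V^2 \leq \eps C_1 \|u\|_V^2 + C_\eps C_2 \|u\|_V^{p+1}.
\]
Choosing $\eps$ so that $\eps C_1 < 1/2$, I absorb the first term and deduce $\|u\|_V^{p-1} \geq c_0^{(p-1)/2}$ for a constant $c_0 > 0$ depending only on the embedding constants and $C_\eps$. This proves the first claim.

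For the second claim, start from
\[
I_\lambda(u) = \tfrac{1}{2}\|u\|_V^2 + \tfrac{\lambda}{4}\int|\nabla \rho(u^2)|^2dx - \int F(u)\,dx,
\]
and use $(f_3)$ to bound $\int F(u)dx \leq \tfrac{1}{4}\int f(u)u\,dx$. Substituting the Nehari identity (\ref{uinN}) for $\int f(u)u\,dx$ and cancelling the $\tfrac{\lambda}{4}\int|\nabla\rho(u^2)|^2$ terms, I arrive at
\[
I_\lambda(u) \geq \tfrac{1}{4}\|u\|_V^2 - \lambda \int \rho'(u^2)\rho''(u^2)u^4|\nabla u|^2 dx.
\]
Since $(\rho_4)$ gives $\rho'' \leq 0$ and $\rho' \geq 0$, the remaining $\lambda$-integral is nonnegative, hence
\[
I_\lambda(u) \geq \tfrac{1}{4}\|u\|_V^2 \geq \tfrac{c_0}{4} > 0,
\]
and taking the infimum over $\mathcal{N}$ gives $m \geq c_0/4 > 0$.

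The only slightly delicate step is verifying the nonnegativity of the packaged $\lambda$-term in the first part; everything else is a standard bootstrap from the Nehari identity. The sign analysis for $(\rho_4)$ was already recorded in Remark \ref{obs3}, so no new obstacle arises here.
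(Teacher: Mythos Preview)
Your proof is correct and follows essentially the same approach as the paper's own proof: both package the two $\lambda$-terms via $(\rho_4)$ to get $\|u\|_V^2\le\int f(u)u\,dx$, then use the subcritical growth estimate and Sobolev embeddings for the first claim, and for the second claim both reduce to $I_\lambda(u)\ge\tfrac14\|u\|_V^2$ via $(f_3)$ and the sign of $\rho'\rho''$. The only cosmetic difference is that the paper writes the second step as $I_\lambda(u)-\tfrac14 I_\lambda'(u)u$ directly, whereas you substitute the Nehari identity into the $F$-bound; the resulting expressions coincide.
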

\begin{proof}
For any $u\in\mathcal{N}$, we have
\begin{eqnarray*}
||u||_V^2 +4\lambda\im \rho'(u^2)\rho''(u^2)u^4|\nabla u|^2 dx  + \lambda\im |\nabla \rho(u^2)|^2 dx  - \im f(u)udx=0,
\end{eqnarray*}
what yields by $(\rho_4)$ (see Remark \ref{obs3})
\begin{equation}\label{Neh}
||u||_V^2\leq ||u||_V^2 +4\lambda\im \rho'(u^2)u^2|\nabla u|^2[\rho''(u^2)u^2+\rho'(u^2)] dx  = \im f(u)udx.
\end{equation}
From (\ref{<e}), for all $\varepsilon>0$, there is a positive constant $C=C(\varepsilon)$ such that
$$
||u||_V^2 \leq \im f(u)u dx \leq \varepsilon ||u||_V^2 + C||u||^{p+1}_{V},
$$
where we used the continuous embedding $\hd\hookrightarrow L^s(\mathbb{R}^N)$ for $s=2$ and $s=p+1$. So, choosing $\varepsilon$ adequately, we can find a positive constant $c_0>0$ satisfying $0<c_0\leq ||u||_V^2$, what proves the first part of the lemma. For the second one, note that, from hypotheses $(f_3)$ and $(\rho_4)$, 
\begin{eqnarray*}
I_\lambda(u) &=& I_\lambda(u) - \dfrac{1}{4}I_\lambda'(u)u = \dfrac{1}{4}||u||_V^2 - \lambda\im \rho'(u^2)\rho''(u^2)u^4|\nabla u|^2 dx \\
& & +\im\left(\dfrac{1}{4}f(u)u - F(u) \right)dx \geq \dfrac{1}{4}||u||_V^2,
\end{eqnarray*}
which, applying the first part of the lemma, provides
$
I_\lambda(u)\geq \dfrac{c_0}{4}>0,
$
and concludes the proof.
\end{proof}

Despite there is a minimizing sequence in $\mathcal{N}$ for functional $I_\lambda$, it can not be a sequence that converges weakly for a solution $u_0\in\mathcal{N}$ for problem $(P_\lambda)$. In the next result, we will show the existence of an appropriate minimizing sequence for our purpose.
\begin{lemma}\label{psseq}
There exists a $(PS)_m$-sequence $(u_n)\subset\mathcal{N}$ for functional $I_\lambda$, i.e., a sequence $(u_n)\subset\mathcal{N}$ satisfying
$$
I_\lambda(u_n)\to m \ \ \ \ \ \textrm{and} \ \ \ \ \ I_\lambda'(u_n)\to 0
$$
as $n\to+\infty$.
\end{lemma}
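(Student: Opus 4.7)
The plan is classical: produce a Palais--Smale sequence for the restricted functional $I_\lambda|_\mathcal{N}$ via Ekeland's variational principle, and then pass from the constrained derivative to the unconstrained one by the Lagrange multiplier rule, exploiting the strict inequality $J_\lambda'(u)u<0$ on $\mathcal{N}$ established in Lemma~\ref{manif}.

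First I would pick any minimizing sequence $(v_n)\subset\mathcal{N}$ for $I_\lambda$. The identity $I_\lambda(v_n)=I_\lambda(v_n)-\tfrac14 I_\lambda'(v_n)v_n\ge \tfrac14\|v_n\|_V^2$ obtained in the proof of Lemma~\ref{m>0} shows that $(v_n)$ is bounded in $\hd$. Moreover, the bound $\|u\|_V^2\ge c_0>0$ on $\mathcal{N}$ from Lemma~\ref{m>0} guarantees that $\mathcal{N}$ is closed in $\hd$: an $\hd$-limit of elements of $\mathcal{N}$ is nonzero and, by continuity of $I_\lambda'$, still satisfies $I_\lambda'(u)u=0$. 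Thus $\mathcal{N}$ is a complete $C^1$-manifold with $I_\lambda|_\mathcal{N}$ bounded below by $m>0$, and Ekeland's variational principle (in its manifold version) supplies a sequence $(u_n)\subset\mathcal{N}$ with $I_\lambda(u_n)\to m$ and $\|(I_\lambda|_\mathcal{N})'(u_n)\|_*\to 0$.

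Second, the Lagrange multiplier rule for the $C^1$-constraint $\{J_\lambda=0\}$ yields real numbers $\mu_n$ such that $I_\lambda'(u_n)-\mu_n J_\lambda'(u_n)\to 0$ in $(\hd)^*$. Testing against $u_n$ and using $I_\lambda'(u_n)u_n=0$ gives $\mu_n\,J_\lambda'(u_n)u_n=o(1)$. From the estimate (\ref{j<01}) and hypothesis $(f_2)$,
\[
J_\lambda'(u_n)u_n\le \int_{\mathbb{R}^N}\bigl(f(u_n)u_n-f'(u_n)u_n^2\bigr)\,dx\le -\delta\int_{\mathbb{R}^N}|u_n|^p\,dx,
\]
so the crux is to show $\int_{\mathbb{R}^N}|u_n|^p\,dx\ge c>0$ uniformly in $n$. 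For this I would use the Nehari identity combined with $(\rho_4)$ as in (\ref{Neh}), namely $c_0\le\|u_n\|_V^2\le \int_{\mathbb{R}^N} f(u_n)u_n\,dx$, together with a growth estimate of the form $|f(s)s|\le \varepsilon s^2+C_\varepsilon|s|^{p}$ (a standard consequence of $(f_1)$ and $(f_4)$ obtained by splitting the regions $|s|$ small, moderate and large) and the boundedness of $(u_n)$ in $L^2(\mathbb{R}^N)$; choosing $\varepsilon$ small then forces the desired uniform lower bound.

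The main obstacle in carrying out this plan is precisely this strictly positive lower bound for $-J_\lambda'(u_n)u_n$, because a priori the quasilinear $\rho$-contributions mix with the semilinear ones in both $J_\lambda(u_n)=0$ and in the expression for $J_\lambda'(u_n)u_n$; the remedy is to exploit $(\rho_4)$ (via the positivity already used to derive (\ref{Neh})) to discard the $\lambda$-terms with the correct sign in both places. Once $\mu_n\to 0$ has been secured, the fact that $J_\lambda'(u_n)$ is bounded in $(\hd)^*$ (a routine consequence of the boundedness of $(u_n)$ in $\hd$ together with $(\rho_1)$--$(\rho_5)$ and the growth of $f$ and $f'$) yields $I_\lambda'(u_n)=\mu_n J_\lambda'(u_n)+o(1)\to 0$ in $(\hd)^*$, completing the proof.
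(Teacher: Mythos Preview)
Your proposal is correct and follows essentially the same route as the paper: Ekeland's principle on the $C^1$-manifold $\mathcal{N}$ produces a minimizing sequence with an associated Lagrange multiplier sequence, boundedness of $(u_n)$ follows from $I_\lambda(u_n)-\tfrac14 I_\lambda'(u_n)u_n\ge\tfrac14\|u_n\|_V^2$, the multipliers are killed via the uniform lower bound $-J_\lambda'(u_n)u_n\ge\delta\int|u_n|^p\ge c>0$ coming from $(f_2)$ and (\ref{Neh}), and boundedness of $J_\lambda'(u_n)$ in the dual finishes. The only cosmetic difference is that you use the growth estimate $|f(s)s|\le\varepsilon s^2+C_\varepsilon|s|^{p}$ (matching the exponent in $(f_2)$) rather than the paper's $|s|^{p+1}$ from (\ref{<e}); your version is in fact the cleaner choice here.
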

\begin{proof} We will apply Ekeland's Principle (see Theorem 8.5 in \cite{Willem}). Since $f\in C^2(\mathbb{R},\mathbb{R})$ and by hypotheses on function $\rho$, it is possible to show that functional $J_\lambda(u)=I_\lambda'(u)u$ belongs to $C^2(\hd,\mathbb{R})$ (see Remark \ref{Jc2} after this proof). By Lemma \ref{manif}, $J_\lambda'(u)\neq 0$ for all $u\in\mathcal{N}$. In view of $I_\lambda\in C^1(\hd,\mathbb{R})$, by Ekeland's Principle, there exist a sequence $(u_n)\subset\mathcal{N}$ and a sequence $(\lambda_n)\subset\mathbb{R}$ such that
$$
I_\lambda(u_n)\to m \ \ \ \ \ \textrm{and} \ \ \ \ \ (I_\lambda'(u_n)+\lambda_n J_\lambda'(u_n))\to 0
$$
as $n\to +\infty$. From the inequality
$$
m + o_n(1) =I_\lambda(u_n)= I_\lambda(u_n) -\dfrac{1}{4}I_\lambda'(u_n)u_n \geq \dfrac{1}{4}||u_n||^2_V,
$$
it follows that $(u_n)$ is bounded and, consequently, 
\begin{equation}\label{j>lam}
o_n(1) = || I_\lambda'(u_n) + \lambda_n J_\lambda'(u_n)|| \geq \dfrac{|I_\lambda'(u_n)u_n + \lambda_n J_\lambda'(u_n)u_n|}{||u_n||}\geq C|\lambda_n J_\lambda'(u_n)u_n|,
\end{equation}
for some $C>0$. Observing the proof of Lemma \ref{manif} (specifically (\ref{j<01})) together with hypothesis $(f_2)$, it yields
$$
|J_\lambda'(u_n)u_n| \geq \im (f'(u_n)u_n^2 - f(u_n)u_n)dx\geq \delta|u_n|^{p+1}_{p+1}.
$$
We may apply the first part of Lemma \ref{m>0}, the boundness of $(u_n)$ and the inequalities
$$
c_0\leq ||u_n||_V^2\leq  \im f(u_n)u_n dx\leq \varepsilon |u_n|_2^2 + C_\varepsilon|u_n|^{p+1}_{p+1}
$$
to guarantee the existence of a positive constant $C>0$ such that
$$
|u_n|_{p+1}^{p+1} \geq C>0.
$$
Thus, $|J_\lambda'(u_n)u_n|\geq \delta C>0$ and, substituting this in (\ref{j>lam}), we have necessarily $\lambda_n\to 0$ as $n\to +\infty$. Finally, the arguments contained in proof of Proposition \ref{welldI} may be used to ensure that, since $(u_n)$ is bounded, so $(|J_\lambda'(u_n)\varphi|)$ is bounded uniformly on $||\varphi||_V\leq 1$. To see this, it is enough apply similar inequalities as in (\ref{1<123}) together conditions $(\rho_1)$ and $(\rho_2)$ in the expression of $J_\lambda'(u_n)\varphi$. Therefore, if $||\varphi||_V\leq 1$, then
$$
o_n(1) = ||I_\lambda'(u_n)+\lambda_nJ_\lambda'(u_n)|| \geq |I_\lambda'(u_n)\varphi + \lambda_n J_\lambda'(u_n)\varphi| \geq |I_\lambda'(u_n)\varphi| +o_n(1),
$$
and consequently $I_\lambda'(u_n)\to 0$ as $n\to +\infty$. The proof is completed.
\end{proof}
\begin{remark}\label{Jc2}
In the proof of the Lemmas \ref{manif} and \ref{psseq}, we naturally used that $J$ is a $C^1$-functional (in the end of the proof of the Lemma \ref{manif}) and that $J$ is a $C^2$-functional (in the proof of the Lemma \ref{psseq}). We decide to omit the proofs of these facts, because they are just a tedious but elementary calculations, in which the assumptions on functions $\rho$ and $f$ are employed and several arguments as in proof of the Proposition \ref{welldI} are done.
\end{remark}

We are now able to prove our main result. Before it, see that every result already done until here remains true if we change $\hd$ by $\hum$. Because of this, in the next proof, we consider $I_\lambda|_{\hum}$. Also, note that every critical point in $\hum$ for the functional $I_\lambda$ is also a critical point in $\hd$ for the same functional. This is a principle of
symmetric criticality for reflexive spaces due to de Morais Filho, do \'O and Souto (see \cite{DOS}, Section 3, Proposition 3.1).
\bigskip\\
\noindent\textbf{Proof of Theorem \ref{main}:} From Lemma \ref{psseq}, consider $(u_n)\subset\mathcal{N}$ a $(PS)_m$ sequence for functional $I_\lambda$, i.e., $I_\lambda(u_n)\to m$ and $I'_\lambda(u_n)\to 0$ as $n\to+\infty$. As already done before, we have that $(u_n)$ is bounded in $\hum$ and, hence, up to a subsequence, we get the existence of $u_\lambda\in\hum$ such that the weak convergence $u_n\rightharpoonup u_\lambda$ holds as $n\to +\infty$. By Sobolev embeddings and assumptions on functions $\rho$ and $f$, we obtain $I_\lambda'(u_\lambda)\varphi=0$ for all $\varphi\in C_0^\infty(\mathbb{R}^N)$. For density argument, we have $I'_\lambda(u_\lambda)=0$. Since $H^2_{\textrm{rad}}(\mathbb{R}^N)$ is compactly embedded in $L^{p+1}(\mathbb{R}^N)$ (see Remark \ref{remen} below), it follows from (\ref{<e}), Lemma \ref{m>0}, (\ref{Neh}) and the boundedness of $(|u_n|_2)$ that
$$
0<c_0<||u_n||_V^2 \leq \im f(u_n)u_n dx \leq \varepsilon C + C|u_\lambda|^{p+1}_{p+1} +o_n(1), 
$$
for some $C>0$, what guarantees that $u_\lambda\neq 0$. Consequently, $u_\lambda\in\mathcal{N}$ is a nontrivial weak solution for problem $(P_{\lambda})$. Let us show that it is also a ground state solution. For this, by the weak convergence $u_n\rightharpoonup u_\lambda$, the convergences $u_n(x)\to u_\lambda(x)$ and $|\nabla u_n(x)|\to|\nabla u_\lambda(x)|$ a.e.  $x\in\mathbb{R}^N$  and Fatou's Lemma, hypothesis $(f_3)$ and $(\rho_4)$ may be applied to obtain, up to a subsequence,
$$
\begin{array}{rcl}
m\leq I_\lambda(u_\lambda) &=& I_\lambda(u_\lambda) - \dfrac{1}{4}I_\lambda'(u_\lambda)u_\lambda = \dfrac{1}{4}||u_\lambda||^2_V - \lambda\im \rho'(u_\lambda^2)\rho''(u_\lambda^2)u_\lambda^4|\nabla u_\lambda|^2 dx\\
&&+ \im\left(\dfrac{1}{4}f(u_\lambda)u_\lambda-F(u_\lambda)\right)dx \\
& \leq & \dis\liminf_{n\to+\infty} \left[ \dfrac{1}{4}||u_n||^2_V - \lambda\im \rho'(u_n^2)\rho''(u_n^2)u_n^4|\nabla u_n|^2 dx\right.\\
&& \left.+ \im\left(\dfrac{1}{4}f(u_n)u_n-F(u_n)\right)dx\ \right] \\
& = &  \dis\liminf_{n\to+\infty} \left( I_\lambda(u_n) - \dfrac{1}{4}I_\lambda'(u_n)u_n\right)=m,
\end{array}
$$
that is, $I_\lambda(u_\lambda)=m$ and the proof of the theorem is completed. \qed
\begin{remark}\label{remen}
The proof of the compact embedding $H^2_{\textrm{rad}}(\mathbb{R}^N)\hookrightarrow L^s(\mathbb{R}^N)$ for $2<s<2_*$ follows directly the same steps contained in section 1.5 from \cite{Willem}, where the reader will find the proof of the well known compact embedding $H^1_{\textrm{rad}}(\mathbb{R}^N)\hookrightarrow L^r(\mathbb{R}^N)$ for $2<r<2^*$. The main ingredients are the usual Sobolev embeddings and Lion's Lemma.
\end{remark}

If $\lambda=0$, the method applied in this article works for all $N\geq 3$, and the problem
\begin{equation}\tag{$P_0$} \left\{ \begin{array}[c]{ll}
           \Delta^2 u- \Delta u + V(x)u = f(u)\, \, \mbox{in} \, \, \mathbb{R}^N, &\\
           u\in W^{2,2}(\mathbb{R}^N),
        \end{array}
         \right.
\end{equation}
weakening the hypotheses on the nonlinearity $f$ adequately, has a nontrivial radially symmetric ground state solution $u_0\in\hum$. 

\bigskip
\noindent {\sc acknowledgement}.  J. C. Oliveira Junior  would like to thank very much the  Universidade de Bras\'ilia for the so pleasant production environment, where all this work was developing. The author also thanks the reviewer for corrections and suggestions.

\end{document}